\definecolor{darkgreen}{rgb}{0, 0.5, 0}
\newtheorem{theorem}{Theorem}
\newtheorem{Ex}{Example}
\newtheorem*{theorem*}{Theorem}
\newtheorem{remark}{Remark}
\newcommand{\eps}{\varepsilon}
\newcommand{\F}{\mathbb{F}}
\begin{document}

\title{The evolution of the spectrum of a Frobenius Lie algebra under deformation}

\author[*]{Vincent E. Coll, Jr.}
\author[**]{Nicholas Mayers}
\author[*]{Nicholas Russoniello}

\affil[*]{Department of Mathematics, Lehigh University, Bethlehem, PA, 18015}
\affil[**]{Department of Mathematics, Milwaukee School of Engineering, Milwaukee, WI, 53202}

\maketitle


\begin{abstract}
\noindent

\noindent
The category of Frobenius Lie algebras is stable under deformation, and here we examine explicit infinitesimal deformations of four and six dimensional Frobenius Lie algebras with 
the goal of understanding if 
the spectrum of a Frobenius Lie algebra can evolve under deformation. It can.

\end{abstract}

\noindent
\textit{Mathematics Subject Classification 2010}: 17B20, 05E15

\noindent 
\textit{Key Words and Phrases}: Frobenius Lie algebra, deformation, spectrum

\section{Introduction}

A Lie algebra ($\mathfrak{g}, [-,-]$) is \textit{Frobenius} if there exists a linear functional $F\in \mathfrak{g}^*$ such that the natural map $\eta :\mathfrak{g} \rightarrow \mathfrak{g}^*$ defined by $x \mapsto  F[x,-]$ is an isomorphism. Such an $F$ is called a \textit{Frobenius functional}.  The set of Frobenius functionals of a Frobenius Lie algebra $\mathfrak{g}$ is, in general, quite large; forming an open subset of $\mathfrak{g}^{*}$ in the Zariski and Euclidean topologies (see \textbf{\cite{Ooms2}} and \textbf{\cite{DK}}).

Frobenius Lie algebras were introduced in the 1970's by Ooms who showed, in particular, that the universal enveloping algebra $U(\mathfrak{g})$ admits a faithful simple representation when $\mathfrak{g}$ is Frobenius (see \textbf{\cite{Ooms2}}). Such algebras also have applications in invariant theory and the geometry of coadjoint orbits in $\mathfrak{g}^*$ (see \textbf{\cite{orbit}}). Deformation theorists are interested in Frobenius Lie algebras because each provides a solution to the classical Yang-Baxter equation, which in turn quantizes to a universal deformation formula, i.e., a Drinfel'd twist which deforms any algebra which admits an action of $\mathfrak{g}$ by derivations (see \textbf{\cite{twist}}).

If $\mathfrak{g}$ is Frobenius and $F\in \mathfrak{g}^*$ is a Frobenius functional, then the inverse image of $F$ under the mapping
$\eta$ is called a \textit{principal element} of $\mathfrak{g}$ and will be denoted $\widehat{F}$ (see \textbf{\cite{Prin}}).  It is the unique element of $\mathfrak{g}$ such that 
$$
F\circ \mathrm{ad}~\widehat{F}= F([\widehat{F},-]) = F.  
$$



\noindent In \textbf{\cite{Ooms2}}, Ooms established that the spectrum of the adjoint of a principal element of a Frobenius Lie algebra is independent of the principal element chosen to compute it (see also \textbf{\cite{G2}}).  Consequently, we can unambiguously refer to the $spectrum$ of $\mathfrak{g}$ as the spectrum of the adjoint representation of any Frobenius functional $F\in \mathfrak{g}^*$.

Since the category of Frobenius Lie algebras is stable under deformation -- the index can only decrease under deformation (see \textbf{\cite{AM}}) -- we come to the motivating question of this article.

\bigskip
\noindent
 \textit{Q:  Does the spectrum of a Frobenius Lie algebra evolve under the deformation of the underlying algebra?}

\bigskip
\noindent

To address this answer one requires easy examples of infinitesimal deformations of Frobenius Lie algebras.  Such examples are sparse (see \textbf{\cite{contact}}).

Here, following  Czikos’s and Verhoczkis' classification of four and six-dimensional Frobenius Lie algebras (\textbf{\cite{CV}}, cf. Tables 1 and 2), we investigate the infinitesimal deformation theory of these algebras to find that many of these Frobenius Lie algebras can be deformed.  A particularly rich example is provided by a certain one of these four-dimensional algebras, $\Phi^\prime$, for which we provide detailed cohomological and spectral calculations. The calculations are routine but potentially instructive. 





 The structure of the paper is as follows. In Section \ref{defcoh}, we recall the well-known infinitesimal deformation theory of Nijenhuis and Richardson (see \textbf{\cite{F,Fox}}), wherein the deformation of a Lie algebra is controlled by the graded Chevalley-Eilenberg complex (see \textbf{\cite{Sasha}}). In Section \ref{class}, we present the classification of four-dimensional Frobenius Lie algebras by detailing in Table \ref{4D} the commutator relations, the dimension of the 
 second and third cohomology groups of the Lie algebra with coefficients in the Lie algebra (the case of interest for deformation theory), the spectrum, and whether or not deformations of the algebra exist.
  In Section \ref{sec:deformation}, we provide detailed deformation theory calculations associated with $\Phi^\prime$ yielding Table \ref{table:def} where we provide the full deformation and spectral values at deformation parameter instance $t$, for all deformations of the four-dimensional Lie algebras here considered.
  In Section 5, a short Epilogue provides connections between this paper's examples and topical ``spectral'' research, along with announcements of new results that will be appearing and will be of interest to researchers in this area.  In Appendix A, Table \ref{table:six} is the analogue of Table \ref{table:def} for the six-dimensional Frobenius Lie algebras in Czikos's and Verhoczki's classification.  The calculations are similar to those in the four-dimensional case but more tedious for the parametrized families.  We provide the results of our cohomological calculations for one such parametrized family $\Phi_{6,12}(\xi)$  
  (see Example \ref{ex:6dim}).  We find this example interesting because 
  for all values of 
  $\xi$, $\dim H^2(\Phi_{6,12}(\xi)\neq 0$ and only eight values of $\xi$ where 
    $\dim H^3(\Phi_{6,12}(\xi),\Phi_{6,12}(\xi))\neq 0$.  Even so, all infinitesimals are unobstructed.
    
    For the non-parametrized six-dimensional families, we provide the number of inequivalent deformations. As with the four-dimensional Lie algebras, all deformations are linear.


\section{Deformation theory}\label{defcoh}

Let $(\mathfrak{g}, [~,~])$ be a Lie algebra over an algebraically closed field $\F$, where char $\F \neq 0$. A formal one-parameter deformation of $\mathfrak{g}$ is a power series
\begin{equation}
[g,h]_t=[g,h]+\sum_{k\geq 1}\alpha_k(g,h)t^k,
\end{equation}
where $\alpha_k\in$ $\rm{HOM}_\F(\Lambda^2\mathfrak{g},\mathfrak{g})=C^2(\mathfrak{g},\mathfrak{g})$.  The latter refers to the standard Chevalley-Eilenberg cochain complex $(C^{\bullet}(\mathfrak{g},\mathfrak{g}),\delta)$ of $\mathfrak{g}$ with coefficients in the adjoint representation of $\mathfrak{g}$. Here, $C^n(\mathfrak{g},\mathfrak{g})$ consists of forms $F^n:\bigwedge_{i=1}^n\mathfrak{g}\to \mathfrak{g}$ satisfying $\delta^2 F^n=0$, where the coboundary operator $\delta$ is defined by 
\begin{align}
\delta F^n(g_1,\hdots,g_{n+1})=\sum_{i=1}^{n+1}(-1)^{i+1}[&g_i, F^n(g_1,\hdots,\hat{g}_i,\hdots,g_{n+1})]\nonumber \\ 
&+\sum_{1\le i<j\le n+1}(-1)^{i+j}F^n([x_i,x_j],x_1,\hdots,\hat{x}_i,\hdots,\hat{x}_j,\hdots,x_{n+1}).\nonumber
\end{align}

In this setting, $Z^n(\mathfrak{g},\mathfrak{g})=\ker(\delta)\cap C^n(\mathfrak{g},\mathfrak{g})$, $B^n(\mathfrak{g},\mathfrak{g})=Im(\delta)\cap C^n(\mathfrak{g},\mathfrak{g})$, and $H^n(\mathfrak{g},\mathfrak{g})=Z^n(\mathfrak{g},\mathfrak{g})/ B^n(\mathfrak{g},\mathfrak{g})$.  These comprise, respectively, the \textit{n-cocycles}, \textit{n-coboundaries}, and $n^{th}$ \textit{cohomology group} of $\mathfrak{g}$ with coefficients in $\mathfrak{g}$.  Of course, one requires that the deformation remains in the category of Lie algebras so that the Jacobi identity for $[~,~]_t$ is satisfied for all values of $t$. This is equivalent 
to the sequence of relations

\begin{equation}\label{eqn:massey}
\delta \alpha_k = -\frac{1}{2}\sum_{i=1}^{k-1} [\alpha_i,\alpha_{k-i}],
\end{equation}

\noindent
where 
\begin{equation}\label{eqn:mprod}
\begin{split}
& [\beta,\gamma](g_1,\hdots,g_{p+q-1})=\sum_{1\le i_1<\hdots<i_q\le p+q-1}(-1)^{i_1+\hdots+i_q-\frac{q(q+1)}{2}}\beta(\gamma(g_{i_1},\hdots,g_{i_q}),g_1,\hdots\hat{g}_{i_1}\hdots\hat{g}_{i_q}\hdots,g_{p+q-1}) \\ & +(-1)^{pq+p+q}\sum_{1\le j_1<\hdots<j_p\le p+q-1}(-1)^{j_1+\hdots+j_p-\frac{p(p+1)}{2}}\gamma(\beta(g_{j_1},\hdots,g_{j_p}),g_1,\hdots\hat{g}_{j_1}\hdots\hat{g}_{j_p}\hdots,g_{p+q-1}).
\end{split}
\end{equation}

\noindent In particular, when $p,q=2$ equation (\ref{eqn:mprod}) becomes $$[\beta,\gamma](g_1,g_2,g_3)=\sum_{1\le i_1<i_2\le 3}(-1)^{i_1+i_2-3}\beta(\gamma(g_{i_1},g_{i_2}),g_1,\hdots\hat{g}_{i_1}\hdots\hat{g}_{i_3}\hdots,g_3)$$ $$+\sum_{1\le j_1<j_2\le 3}(-1)^{j_1+j_2-3}\gamma(\beta(g_{j_1},g_{j_2}),g_1,\hdots\hat{g}_{j_1}\hdots\hat{g}_{j_2}\hdots,g_3).$$

Two deformations $[g,h]_t,[g,h]_t^\prime$ are called \textit{equivalent} if there exists a formal one-parameter family $\{\phi_t\}$ of linear transformations of $\mathfrak{g},$ $$\phi_t(g)=g+\sum_{k\geq 1}\beta_k(g)t^k,$$ such that \begin{equation}\label{eqn:equiv}
[g,h]_t^\prime=\phi_t^{-1}[\phi_t(g),\phi_t(h)]_t.
\end{equation}

It is easy to see that equivalent deformations have cohomologous infinitesimals:  that is, $\alpha_1 - \alpha_1^{\prime} = \delta \beta$ in (\ref{eqn:equiv}).  Therefore, $\alpha_1$ (more precisely, its equivalence class $[\alpha_1]\in H^2(\mathfrak{g},\mathfrak{g})$) is called the \textit{infinitesimal} of the deformation.  So, up to equivalence,  the infinitesimal deformations of $\mathfrak{g}$ may be regarded as elements of 
$H^2 ( \mathfrak{g}, \mathfrak{g})$ with the obstructions to their propagation to higher-order deformations lying in $H^3 ( \mathfrak{g}, \mathfrak{g})$.  If each element of $H^2 ( \mathfrak{g}, \mathfrak{g})$ is obstructed, then $\mathfrak{g}$
is called \textit{rigid}, and if $H^2 ( \mathfrak{g}, \mathfrak{g})=0$ then 
$\mathfrak{g}$ is said to be  \textit{absolutely rigid}.

As it happens, all the deformations here considered are linear, so we require only the terms corresponding to $k=1$ and 2 in (\ref{eqn:massey}):
$$
\delta\alpha_1 =0\text{ and} ~~\delta \alpha_2=- \frac{1}{2}[\alpha_1,\alpha_1].$$
A \textit{jump deformation} of $\mathfrak{g}$ is one such that all specializations, as $t$ varies over $\F,$ are all isomorphic except perhaps, the specialization to t = 0, which must be isomorphic to $\mathfrak{g}$ itself.

\section{Classification}\label{class}

The following table contains information about the isomorphism classes of four-dimensional Frobenius Lie algebras over a field  $\mathbb{F}$, where char $\F \neq 2.$  Following the notation of \textbf{\cite{CV}}, there is a single four-dimensional Lie algebra $\Phi^\prime$, and two families of four-dimensional Lie algebras $\Phi^{\prime \prime}$ and $\Phi^{\prime \prime \prime}$, parametrized by $\Delta \in \F$ and $0\neq \eps \in \F$, respectively. Note the distinguished $\Delta$ value of 0, which affects the dimension of the second cohomology group.

\begin{table}[H]
\begin{center}
\renewcommand\arraystretch{3}
\setlength\doublerulesep{0pt}
\begin{tabular}{|p{1.5cm}|||p{4.2cm}|p{1.3cm}|p{1.3cm}|p{2.5cm}|p{2.3cm}|}
\hline
\centering $\mathfrak{g}$ & \centering Commutator Relations & \centering dim $H^2$ & \centering dim $H^3$ & \centering Spectrum & \makecell{Deformation} \\
 \hline\hline\hline
 \centering $\Phi'$ & \makecell[l]{$[e_1,e_4]=[e_2,e_3]=-e_1,$ \\  $[e_2,e_4]=-\frac{1}{2}e_2,$ \\   $[e_3,e_4]=-\frac{1}{2}e_3$} &  \centering 3  & \centering 0 & \centering $\left\{0,\frac{1}{2},\frac{1}{2},1\right\}$ & \makecell{Yes}\\
\hline
\centering $\Phi''(0)$ & \makecell[l]{$[e_1,e_4]=[e_2,e_3]=-e_1,$ \\  $[e_2,e_4]=-e_3,$ \\ $[e_3,e_4]=-e_3$} & \centering 2 & \centering 1 & \centering $\left\{0,0,1,1\right\}$ & \makecell{Yes} \\
\hline
 \centering $\Phi''(\Delta)$ & \makecell[l]{$[e_1,e_4]=[e_2,e_3]=-e_1,$ \\  $[e_2,e_4]=-e_3,$ \\ $[e_3,e_4]=-e_3+\Delta e_2$} & \centering 1 & \centering 0 & \centering $\left\{0,1,\frac{1\pm\sqrt{1-4\Delta}}{2}\right\}$ & \makecell{Yes} \\
\hline
\centering $\Phi'''(\varepsilon)$ & \makecell[l]{$[e_1,e_3]=[e_2,e_4]=-e_1,$ \\  $[e_1,e_4]=\varepsilon e_2,$ \\ $[e_2,e_3]=-e_2$} & \centering 0 & \centering 0 & \centering $\{0,0,1,1\}$ & \makecell{No} \\
\hline
\end{tabular}
\caption{Four-dimensional Frobenius Lie algebras}\label{4D}
\end{center}
\end{table}

\begin{remark}

Note that $\Phi'''(\varepsilon_1)=\Phi'''(\varepsilon_2)$ if and only if the quotient $\varepsilon_1/\varepsilon_2$ is the square of an element in $\mathbb{F},$ and all other pairs of Lie algebras in the table are non-isomorphic.
Only $\Phi^\prime$ and $\Phi^{\prime\prime}(\Delta)$ have 
deformations.  $\Phi'''(\eps)$ is organized as a parametrized family of Lie algebras, however, it cannot be represented as a formal deformation. 
For $\Phi'$, there are three deformations: $\Phi'_{1,t},\Phi'_{2,t},$ and $\Phi'_{3,t}$, with deformation parameter $t$. The latter two are jump deformations -- where the deformed algebras happens to be isomorphic to the initial algebra $\Phi'.$  The first deformation will become our prime example. For $\Phi''(\Delta),$ there are two deformations: $\Phi_{1,t}''(\Delta)$ and $\Phi_{2,t}''(0).$ The former exists for all values of $\Delta\in\mathbb{F},$ while the latter is a jump deformation and exists only when $\Delta=0.$ 

\end{remark}


\section{Deformations}\label{sec:deformation} 

\textit{Notation:} To ease notation, we let $\{e_1, e_2, e_3, e_4\}$ be a basis for the Lie algebra $\mathfrak{g}$ under consideration and use $\Gamma^i$ to represent an $i$-cocycle of $\mathfrak{g}$ with coefficients in the adjoint representation of $ \mathfrak{g}$.

\subsection{Cohomology}

In this section, we compute $H^2(\mathfrak{g},\mathfrak{g})$ and $H^3(\mathfrak{g},\mathfrak{g})$ for the Lie algebras $\mathfrak{g}$ listed in Table \ref{4D}.    We provide detailed calculations for $\Phi^\prime$ (see Theorems \ref{thm:phi1h2} and \ref{thm:zero}).  The calculations for the other algebras in Table 1 are similar.  

\begin{theorem}\label{thm:phi1h2}  
A basis for $H^2(\Phi^\prime,\Phi^\prime)$ is given by
$\{[\Gamma_{1,1}^2],[\Gamma_{1,2}^2],[\Gamma_{1,3}^2]\},$  where the $\Gamma$s are defined by 

 \begin{itemize}
        \item $\Gamma_{1,1}^2$ is defined by $\Gamma_{1,1}^2(e_2,e_4)=e_2,$ $\Gamma_{1,1}^2(e_3,e_4)=-e_3.$
        \item $\Gamma_{1,2}^2$ is defined by $\Gamma_{1,2}^2(e_3,e_4)=e_2.$
        \item $\Gamma_{1,3}^2$ is defined by $\Gamma_{1,3}^2(e_2,e_4)=e_3.$
    \end{itemize}
\end{theorem}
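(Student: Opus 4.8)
The plan is to avoid the full $24$-dimensional computation on $C^2(\Phi^\prime,\Phi^\prime)$ by exploiting the grading induced by a principal element, which collapses everything to a tiny weight-zero subcomplex. First I would note that $\mathrm{ad}~e_4$ is diagonal in the basis $\{e_1,e_2,e_3,e_4\}$ with eigenvalues $1,\tfrac12,\tfrac12,0$ (so $e_4$ is, up to scale, a principal element, and these eigenvalues are exactly the spectrum recorded in Table \ref{4D}). Assigning $e_i$ the weight $w_i\in\{1,\tfrac12,\tfrac12,0\}$ turns $C^\bullet(\Phi^\prime,\Phi^\prime)$ into a weighted complex on which the Lie derivative $L_{e_4}$ acts as multiplication by the weight $w_{\mathrm{target}}-\sum w_{\mathrm{source}}$. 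Since the Cartan homotopy formula gives $L_{e_4}=\delta\iota_{e_4}+\iota_{e_4}\delta$, the operator $L_{e_4}$ is null-homotopic, hence acts as zero on cohomology; because $\delta$ and $\iota_{e_4}$ preserve the weight grading, each nonzero-weight subcomplex $C^\bullet_\lambda$ is acyclic (one has $\mathrm{id}=\lambda^{-1}(\delta\iota_{e_4}+\iota_{e_4}\delta)$ there, using that the finitely many half-integer weights occurring are units in $\F$, which holds since $\mathrm{char}~\F\neq 2$). Consequently $H^\bullet(\Phi^\prime,\Phi^\prime)$ is computed entirely by the weight-zero subcomplex $C^\bullet_0$.

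Next I would enumerate $C^\bullet_0$ explicitly. A basis cochain sending $e_{i_1}\wedge\cdots\wedge e_{i_n}$ to $e_k$ has weight zero exactly when $w_k=\sum w_{i_\ell}$, and a direct count gives $\dim C^0_0=1$, $\dim C^1_0=6$, $\dim C^2_0=6$, $\dim C^3_0=1$, with $C^n_0=0$ for $n\ge 4$. In particular, a general weight-zero $2$-cochain is determined by six scalars through $\psi(e_1,e_4)=a\,e_1$, $\psi(e_2,e_3)=b\,e_1$, $\psi(e_2,e_4)=c\,e_2+d\,e_3$, $\psi(e_3,e_4)=f\,e_2+g\,e_3$, and the three candidate classes $\Gamma_{1,1}^2,\Gamma_{1,2}^2,\Gamma_{1,3}^2$ correspond respectively to $(a,b,c,d,f,g)=(0,0,1,0,0,-1)$, $(0,0,0,0,1,0)$, and $(0,0,0,1,0,0)$, each of weight zero as required.

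Then I would compute the two relevant differentials on these small spaces. Evaluating $\delta$ on the unique weight-zero $3$-basis vector $(e_2,e_3,e_4)$ gives $\delta\psi(e_2,e_3,e_4)=(a-c-g)\,e_1$, so $\delta\colon C^2_0\to C^3_0$ has rank $1$ and $Z^2_0=\{(a,b,c,d,f,g):a=c+g\}$ is $5$-dimensional (this same surjectivity yields $\dim H^3=0$, consistent with Theorem \ref{thm:zero}). Evaluating $\delta$ on a general weight-zero $1$-cochain shows its image $B^2_0$ is spanned by the cochain $(-1,0,-\tfrac12,0,0,-\tfrac12)$, the coboundary of $e_4\mapsto e_4$, together with $(0,1,0,0,0,0)$, so $\dim B^2_0=2$. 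Hence $\dim H^2(\Phi^\prime,\Phi^\prime)=5-2=3$. A final short linear-algebra check confirms that $\Gamma_{1,1}^2,\Gamma_{1,2}^2,\Gamma_{1,3}^2$ all satisfy $a=c+g$ (so lie in $Z^2_0$) and are independent modulo $B^2_0$; being three independent classes in a three-dimensional space, they form a basis.

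The argument has no conceptual obstacle once the grading reduction is installed; the real work is the sign bookkeeping in the Chevalley--Eilenberg differential and the verification that the nonzero-weight subcomplexes are acyclic (equivalently, that $L_{e_4}$ is null-homotopic in the adjoint-coefficient setting). I expect the sign computation of $\delta\colon C^2_0\to C^3_0$ to be the single most error-prone step, and would cross-check the resulting formula $a-c-g$ against a direct evaluation on the full $C^2$ before committing to it.
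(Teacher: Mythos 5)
Your proof is correct, and it takes a genuinely different route from the one in the paper. The paper's argument is a direct linear-algebra computation on all of $C^2(\Phi',\Phi')$: it writes out the twelve cocycle conditions on the coefficients $c_k^{i,j}$, then the coboundary conditions, observes that three of the latter (conditions 9--11) constrain the cocycle rather than the potential primitive $F^1$, deduces the upper bound $\dim H^2\le 3$, and finally exhibits the three classes and checks they are non-cohomologous. You instead invoke the standard Hochschild--Serre-type reduction: since $\mathrm{ad}\,e_4$ is diagonalizable, the Cartan formula $L_{e_4}=\delta\iota_{e_4}+\iota_{e_4}\delta$ kills every nonzero-weight subcomplex, and the whole computation collapses to the weight-zero subcomplex with dimensions $1,6,6,1$. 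I verified your key formulas: $\delta\psi(e_2,e_3,e_4)=(a-c-g)e_1$ (the $b$-terms do cancel), $B^2_0$ is spanned by $(-1,0,-\tfrac12,0,0,-\tfrac12)$ and $(0,1,0,0,0,0)$, and the three $\Gamma$'s lie in $Z^2_0$ and are independent modulo $B^2_0$; so $\dim H^2=5-2=3$ with the stated basis. Your approach buys a cleaner conceptual argument, an exact dimension count rather than an upper bound, and $H^3=0$ (Theorem~\ref{thm:zero}) and $\dim H^1=3$ essentially for free; the paper's approach is more elementary and makes no use of the principal element. One small caveat: the weights occurring include $\pm\tfrac32$, so invertibility of all nonzero weights requires $\mathrm{char}\,\F\neq 2$ \emph{and} $\neq 3$ (in characteristic $3$ the ``weight $-\tfrac32$'' cochains land in the weight-zero subcomplex and must be carried along); this is harmless in the paper's intended characteristic-zero setting but your parenthetical claiming $\mathrm{char}\,\F\neq 2$ suffices is slightly too weak.
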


\noindent

\begin{proof}
The fact that $\Gamma^2$ is a cocycle gives the following conditions on its coefficients:\\

\vspace{0.1cm}

\begin{minipage}{0.3\linewidth}
\begin{itemize}
    \item $c_1^{1,2}=c_3^{2,3}-3c_4^{2,4}$
    \item $c_2^{1,2}=\frac{1}{2}c_4^{2,3}$
    \item $c_3^{1,2}=0$
    \item $c_4^{1,2}=0$
\end{itemize}
\end{minipage}
\begin{minipage}{0.3\linewidth}
\begin{itemize}
    \item $c_1^{1,3}=-c_2^{2,3}-3c_4^{3,4}$
    \item $c_2^{1,3}=0$
    \item $c_3^{1,3}=\frac{1}{2}c_4^{2,3}$
    \item $c_4^{1,3}=0$
\end{itemize}
\end{minipage}
\begin{minipage}{0.3\linewidth}
\begin{itemize}
    \item $c_1^{1,4}=c_2^{2,4}+c_3^{3,4}$
    \item $c_2^{1,4}=\frac{1}{2}c_2^{2,3}+\frac{1}{2}c_4^{3,4}$
    \item $c_3^{1,4}=\frac{1}{2}c_3^{2,3}-\frac{1}{2}c_4^{2,4}$
    \item $c_4^{1,4}=c_4^{2,3}.$
\end{itemize}
\end{minipage}

\vspace{0.3cm}

\noindent 
If $\Gamma^2(e_i,e_j)=\sum_{k=1}^4 c_k^{i,j}e_k$, $1<i<j<4$
is to be a coboundary, there must exist $F^1\in C^1(\mathfrak{g},\mathfrak{g})$ defined by $F^1(e_{\ell})=\sum_{k=1}^4 c_k^{\ell}e_k,$ $1<\ell<4$
such that the following conditions are satisfied:\\

\vspace{0.1cm}

\begin{minipage}{0.3\linewidth}
\begin{enumerate}
    \item $c_1^1=c_2^2+c_3^3+c_1^{2,3}$
    \item $c_3^4=-\frac{1}{2}c_1^2-c_1^{2,4}$
    \item $c_1^3=2c_2^4-2c_1^{3,4}$
    \item $c_2^1=\frac{1}{2}c_4^3+c_2^{2,3}$
\end{enumerate}
\end{minipage}
\begin{minipage}{0.3\linewidth}
\begin{enumerate}
    \item[5.] $c_3^1=-\frac{1}{2}c_4^2+c_3^{2,3}$
    \item[6.] $c_4^1=c_4^{2,3}$
    \item[7.] $c_4^2=2c_4^{2,4}$
    \item[8.] $c_4^3=2c_4^{3,4}$
\end{enumerate}
\end{minipage}
\begin{minipage}{0.3\linewidth}
\begin{enumerate}
    \item[9.] $c_4^4=-2c_2^{2,4}=-2c_3^{3,4}$
    \item[10.] $c_2^{3,4}=0$
    \item[11.] $c_3^{2,4}=0$.
\end{enumerate}
\end{minipage}

\vspace{0.3cm}

\noindent
Note that for a fixed $\Gamma^2$, we need only be careful of the choices for coefficients of $F^1$ as dictated by conditions 1-8.  However, conditions 9-11 put independent conditions on $\Gamma^2,$ rather than conditions on $F^1.$ This means that the $\Gamma^2$'s which do not satisfy $9-11$ have no such $F^1.$ This gives a upper bound of three on the dimension of $H^2(\Phi',\Phi').$
Choosing $\Gamma_{1,1}^2, \Gamma_{1,2}^2,$ and $\Gamma_{1,3}^2$ as above yields the theorem --  it is straightforward to verify that these are non-cohomologous cocycles. \end{proof}



For $\Phi'$, the following theorem asserts that the the third cohomolgoy group is trivial, so the infinitesimals given in Theorem 1 are unobstructed.

\begin{theorem}\label{thm:zero}
$H^3(\Phi',\Phi')=0.$
\end{theorem}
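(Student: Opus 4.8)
The plan is to show that $Z^3(\Phi',\Phi') = B^3(\Phi',\Phi')$ by a direct dimension count, mirroring the cocycle/coboundary analysis already carried out for $H^2$ in Theorem \ref{thm:phi1h2}. Since $\dim \Phi' = 4$, the space $C^3(\Phi',\Phi') = \mathrm{Hom}(\Lambda^3 \Phi', \Phi')$ has dimension $\binom{4}{3}\cdot 4 = 16$, so all the relevant spaces are small enough to handle explicitly. First I would write a general $3$-cochain $\Gamma^3$ by its values $\Gamma^3(e_i,e_j,e_k) = \sum_{\ell} d_\ell^{i,j,k} e_\ell$ on the four basis triples $(e_1,e_2,e_3),(e_1,e_2,e_4),(e_1,e_3,e_4),(e_2,e_3,e_4)$, giving $16$ unknown coefficients.

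Next I would impose the cocycle condition $\delta \Gamma^3 = 0$. Here $\delta\Gamma^3$ is a $4$-cochain evaluated on the single basis quadruple $(e_1,e_2,e_3,e_4)$ (since $\Lambda^4 \Phi'$ is one-dimensional), producing a single $\Phi'$-valued equation, i.e. four scalar linear relations among the $d_\ell^{i,j,k}$. Expanding $\delta \Gamma^3(e_1,e_2,e_3,e_4)$ via the Chevalley--Eilenberg formula and substituting the structure constants from Table \ref{4D} yields these relations explicitly; this computes $\dim Z^3(\Phi',\Phi')$ as $16$ minus the rank of this linear system. In parallel I would compute $B^3(\Phi',\Phi') = \delta\!\left(C^2(\Phi',\Phi')\right)$. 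The space $C^2$ has dimension $\binom{4}{2}\cdot 4 = 24$, and by rank-nullity $\dim B^3 = 24 - \dim Z^2$; since Theorem \ref{thm:phi1h2} and its proof already pin down $\dim Z^2$ (the cocycle conditions listed there cut $C^2$ down, and $\dim H^2 = 3$ with $\dim B^2 = \dim C^1 - \dim Z^1$ recoverable from the coboundary conditions $1$--$11$), I can obtain $\dim B^3$ without fresh effort. The theorem then follows from the numerical coincidence $\dim Z^3 = \dim B^3$, together with the containment $B^3 \subseteq Z^3$ guaranteed by $\delta^2 = 0$.

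An alternative, and arguably cleaner, route is to exploit the grading of the Chevalley--Eilenberg complex by the eigenvalues of $\mathrm{ad}\,\widehat{F}$ (the principal element), as alluded to in the introduction. Because $\Phi'$ is Frobenius with a semisimple principal element, the coboundary operator preserves weight, and the cohomology is concentrated in weight zero; one can then restrict attention to the zero-weight part of $C^3$, which is far smaller than the full $16$-dimensional space, and check directly that every zero-weight cocycle is a coboundary. Given the explicit spectrum $\{0,\tfrac12,\tfrac12,1\}$ recorded in Table \ref{4D}, the induced weights on $C^3$ are sums of three eigenvalues minus one, and one verifies that the weight-zero constraint is restrictive enough to force triviality.

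The main obstacle I anticipate is bookkeeping rather than conceptual: correctly expanding $\delta\Gamma^3$ and $\delta F^2$ with the sign conventions of the coboundary formula and keeping the antisymmetrization consistent across the $16$ and $24$ coefficients. The risk is a transcription or sign error that would spuriously inflate or deflate a rank; to guard against this I would cross-check the two computations against the Euler-characteristic constraint $\sum_n (-1)^n \dim C^n = \sum_n (-1)^n \dim H^n$ for $\Phi'$, which ties together $\dim H^0, \dim H^1, \dim H^2 = 3$, and $\dim H^3$, providing an independent sanity check that $\dim H^3 = 0$ is consistent with the rest of the complex.
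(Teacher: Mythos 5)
Your plan is sound, and both of your routes lead to a correct proof, but neither is quite what the paper does. The paper derives the same four cocycle relations on $C^3(\Phi',\Phi')$ that you would, and then finishes \emph{constructively}: it writes down an explicit $\Gamma^2$ (its twelve coefficients given as linear expressions in the coefficients of the cocycle $\Gamma^3$) with $\delta\Gamma^2=\Gamma^3$, so surjectivity of $\delta$ onto $Z^3$ is exhibited rather than counted. Your first route replaces that last step with a dimension count, and the numbers do work out: the four cocycle relations are visibly independent, so $\dim Z^3=16-4=12$, while the twelve cocycle conditions in the proof of Theorem~\ref{thm:phi1h2} give $\dim Z^2=24-12=12$, hence $\dim B^3=24-12=12$ and $B^3=Z^3$. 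This buys you freedom from solving for a preimage, at the cost of having to certify two ranks instead of one linear system; note that the detour through $\dim B^2=\dim C^1-\dim Z^1$ is unnecessary (you want $\dim Z^2$, which comes straight from the cocycle conditions), and the Euler-characteristic cross-check is not free since it also requires $\dim H^1$ and $\dim H^4$, neither of which the paper records. Your second route is genuinely different from the paper and considerably slicker: since $\widehat{F}=e_4$ acts semisimply with eigenvalues $1,\tfrac12,\tfrac12,0$ on $e_1,\dots,e_4$ and the Lie derivative along an inner derivation is null-homotopic, the cohomology is concentrated in the weight-zero subcomplex. Here the weight of the cochain $e_{i_1}^*\wedge e_{i_2}^*\wedge e_{i_3}^*\otimes e_k$ is $\lambda_k-(\lambda_{i_1}+\lambda_{i_2}+\lambda_{i_3})$ (your phrase ``sums of three eigenvalues minus one'' garbles this), and a quick check shows the weight-zero part of $C^3$ is the single line spanned by $(e_2,e_3,e_4)\mapsto e_1$, while the weight-zero part of $C^4$ vanishes; since $\delta$ of the weight-zero cochain $(e_1,e_4)\mapsto e_1$ hits that generator, the weight-zero $H^3$ is zero and the theorem follows with almost no bookkeeping. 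If you carry out either route carefully, the proof is complete; the grading argument is the one I would recommend writing up.
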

\begin{proof}
Let $\Gamma^3(e_i,e_j,e_{\ell})=\sum_{k=1}^4c_k^{i,j,\ell}e_k,$ $1<i<j<\ell<4.$
By a straightforward computation, we see that, in order for $\Gamma^3$ to indeed be a cocycle, the following conditions on its coefficients must be satisfied:

\vspace{0.5cm}

\begin{minipage}{0.5\linewidth}
\begin{itemize}
    \item $c^{1,2,3}_1=-c^{1,2,4}_2-c^{1,3,4}_3+c^{2,3,4}_4$
    \item $3c^{1,2,3}_2=-c^{1,3,4}_4$
\end{itemize}
\end{minipage}
\begin{minipage}{0.5\linewidth}
\begin{itemize}
    \item $3c^{1,2,3}_3=c^{1,2,4}_4$
    \item $c^{1,2,3}_4=0.$
\end{itemize}
\end{minipage}

\vspace{0.5cm}

With these conditions established for $\Gamma^3\in Z^3(\Phi,\Phi),$ we will show that there exists some 2-cochain $\Gamma^2$ for which $\delta\Gamma^2=\Gamma^3.$
If $\Gamma^2(e_i,e_j)=\sum_{k=1}^4c_k^{i,j},$ $1<i<j<4,$ and

\vspace{0.5cm}

\begin{minipage}{0.3\linewidth}
\begin{itemize}
    \item $c^{1,2}_1=2(c^{1,4}_3-c^{2,4}_4-c^{1,2,4}_1)$
    \item $c^{1,2}_2=\frac{1}{2}c^{1,4}_4-c^{1,2,4}_2$
    \item $c^{1,2}_3=-c^{1,2,4}_3$
    \item $c^{1,2}_4=-\frac{2}{3}c^{1,2,4}_4$
\end{itemize}
\end{minipage}
\begin{minipage}{0.32\linewidth}
\begin{itemize}
    \item $c^{1,3}_1=-2(c^{1,4}_2+c^{3,4}_4+c^{1,3,4}_1)$
    \item $c^{1,3}_2=-c^{1,3,4}_2$
    \item $c^{1,3}_3=\frac{1}{2}c^{1,4}_4-c^{1,3,4}_3$
    \item $c^{1,3}_4=-\frac{2}{3}c^{1,3,4}_4$
\end{itemize}
\end{minipage}
\begin{minipage}{0.3\linewidth}
\begin{itemize}
    \item $c^{1,4}_1=c^{2,4}_2+c^{3,4}_3+c^{2,3,4}_1$
    \item $c^{2,3}_2=2c^{1,4}_2-c^{3,4}_4-2c^{2,3,4}_2$
    \item $c^{2,3}_3=2c^{1,4}_3+c^{2,4}_4-2c^{2,3,4}_3$
    \item $c^{2,3}_4=c^{1,4}_4-c^{2,3,4}_4.$
\end{itemize}
\end{minipage}

\vspace{0.5cm}

then $\delta \Gamma^2=\Gamma^3,$ and the result follows.
\end{proof}

\subsection{Deformation and Spectrum}\label{defspec}

The main results of this section are displayed in the table below.

\begin{table}[H]
\begin{center}
\renewcommand\arraystretch{3}
\setlength\doublerulesep{0pt}
\begin{tabular}{|p{1.7cm}|||p{2cm}|p{4.2cm}|p{5cm}|}
\hline
\centering $\mathfrak{g}_t$ & \centering Infinitesimals (See Theorem~\ref{thm:phi1h2}) & \centering Commutator Relations & \makecell{Spectrum} \\
 \hline\hline\hline
 \centering $\Phi'_{1,t}$ & \centering$\Gamma_{1,1}^2$ & \makecell[l]{$[e_1,e_4]_t=[e_2,e_3]_t=-e_1,$ \\ $[e_2,e_4]_t=(t-\frac{1}{2})e_2,$ \\ $[e_3,e_4]_t=-(t+\frac{1}{2})e_3$} & \makecell{$\left\{0,\frac{1}{2}-t,\frac{1}{2}+t,1\right\}$}\\
\hline
 \centering $\Phi'_{2,t}$ & \centering$\Gamma_{1,2}^2$ & \makecell[l]{$[e_1,e_4]_t=[e_2,e_3]_t=-e_1,$ \\ $[e_2,e_4]_t=-\frac{1}{2}e_2,$ \\  $[e_3,e_4]_t=-\frac{1}{2}e_3+te_2$} & \makecell{$\left\{0,\frac{1}{2},\frac{1}{2},1\right\}$}\\
 \hline
  \centering $\Phi'_{3,t}$ & \centering$\Gamma_{1,3}^2$ & \makecell[l]{$[e_1,e_4]_t=[e_2,e_3]_t=-e_1,$ \\ $[e_2,e_4]_t=-\frac{1}{2}e_2+te_3,$ \\  $[e_3,e_4]_t=-\frac{1}{2}e_3$} & \makecell{$\left\{0,\frac{1}{2},\frac{1}{2},1\right\}$}\\
  \hline\hline\hline
 \centering \makecell{$\Phi''_{1,t}(\Delta)$} & \centering$\Gamma_{2,1}^2$ &  \makecell[l]{$[e_1,e_4]_t=[e_2,e_3]_t=-e_1,$ \\ $[e_2,e_4]_t=-e_3,$ \\ $[e_3,e_4]_t=(t+\Delta)e_2-e_3$} &  \makecell{$\left\{0,1,\frac{1\pm\sqrt{1-4(\Delta+t)}}{2}\right\}$} \\
\hline
 \centering $\Phi''_{2,t}(0)$ & \centering$\Gamma_{2,2}^2$ &  \makecell[l]{$[e_1,e_2]_t=te_3,$ \\ $[e_1,e_4]_t=[e_2,e_3]_t=-e_1,$ \\ $[e_2,e_4]_t=[e_3,e_4]_t=-e_3,$} &  \makecell{$\{0,0,1,1\}$} \\
\hline
\end{tabular}
\caption{Deformations of four- dimensional Frobenius Lie algebras}
\label{table:def}
\end{center}
\end{table}

\begin{remark}
For given $\Delta\in\mathbb{F},$ $\Phi'_{1,t}\cong \Phi''(\Delta)$ when $t=\frac{\sqrt{1-4\Delta}}{2}.$ This follows from comparing the spectra of the respective Lie algebras and the fact that the category of Frobenius Lie algebras is stable under deformation.
\end{remark}

\begin{remark}
Fixing a value of $\Delta\in\mathbb{F}$ and replacing the basis $\{e_1,e_2,e_3,e_4\}$ by $\{e_1'=2te_1,e_2'=-(t+\frac{1}{2})e_2+e_3,e_3'=(t-\frac{1}{2})e_2+e_3,e_4\},$ we see that $\Phi''(\Delta)\cong \Phi'_{1,t}$ for $t=\frac{\sqrt{1-4\Delta}}{2}\neq 0.$ Replacing the basis $\{e_1,e_2,e_3,e_4\}$ by $\{e_1,e_2,e'_3=e_3+2te_2,e_4\}$ we see that $\Phi'\cong\Phi'_{2,t}$. Replacing the basis $\{e_1,e_2,e_3,e_4\}$ by $\{e_1,e'_2=e_2+2te_3,e_3,e_4\}$ we see that $\Phi'\cong\Phi'_{3,t}$.  Since $F_1$ and $F_2$ are non-cohomologous, they define inequivalent jump deformations. Replacing the basis $\{e_1,e_2,e_3,e_4\}$ by $\{e_1,e_2'=e_2-e_4,e_3'=e_2,e_4'=e_3\},$ we see that $\Phi'''(\eps)\cong \Phi''_{2,t}(0)$ for $t=-\eps.$
\end{remark}

\begin{remark}
We will only prove the results from the table corresponding to the Lie algebra $\Phi'_{1,t}.$ The proofs of the other cases are similar.
\end{remark}

\begin{theorem}
$\Gamma^2_{1,1}$ is the infinitesimal of a deformation of $\Phi'$, giving rise to the deformed algebra $\Phi'_{1,t}$ defined by the relations

\vspace{0.5cm}

\begin{minipage}{0.3\linewidth}
\begin{itemize}
    \item $[e_1,e_4]_t=[e_2,e_3]_t=-e_1$
\end{itemize}
\end{minipage}
\begin{minipage}{0.3\linewidth}
\begin{itemize}
    \item $[e_2,e_4]_t=(t-\frac{1}{2})e_2$ 
\end{itemize}
\end{minipage}
\begin{minipage}{0.3\linewidth}
\begin{itemize}
    \item $[e_3,e_4]_t=-(t+\frac{1}{2})e_3$.
\end{itemize}
\end{minipage}
\end{theorem}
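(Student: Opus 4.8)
The plan is to verify directly that the proposed linear deformation $[\,-,-\,]_t = [\,-,-\,] + t\,\Gamma^2_{1,1}$ genuinely defines a one-parameter family of Lie algebras and then read off its commutator relations. By Theorem~\ref{thm:phi1h2}, $\Gamma^2_{1,1}$ is a nontrivial $2$-cocycle, so $\delta\alpha_1 = \delta\Gamma^2_{1,1} = 0$, which is exactly the $k=1$ relation in~(\ref{eqn:massey}). The remaining content is to show that the infinitesimal is \emph{unobstructed} and that no higher-order corrections are needed, i.e.\ that the bracket is already Jacobi for all $t$ with $\alpha_k = 0$ for $k \geq 2$.

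First I would invoke Theorem~\ref{thm:zero}: since $H^3(\Phi',\Phi') = 0$, every obstruction class vanishes, so the infinitesimal $\Gamma^2_{1,1}$ propagates to a genuine formal deformation. Concretely, the only obstruction to extending past first order lives in the $k=2$ relation $\delta\alpha_2 = -\tfrac{1}{2}[\alpha_1,\alpha_1]$; because $[\Gamma^2_{1,1},\Gamma^2_{1,1}]$ is automatically a $3$-cocycle and $H^3 = 0$, it is a coboundary, so a solution $\alpha_2$ exists. The cleaner route, however, and the one the table's ``all deformations are linear'' claim suggests, is to check that $[\Gamma^2_{1,1},\Gamma^2_{1,1}] = 0$ outright via the Nijenhuis--Richardson product in~(\ref{eqn:mprod}) with $p=q=2$. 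If this bracket vanishes identically, then $\alpha_2 = 0$ solves the $k=2$ relation and all higher relations reduce to $\delta\alpha_k = 0$, satisfied by $\alpha_k = 0$; thus the linear bracket is exactly Jacobi for every $t$.

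Next I would simply compute the deformed brackets on basis pairs. Adding $t\,\Gamma^2_{1,1}$ to the structure constants of $\Phi'$ affects only the pairs on which $\Gamma^2_{1,1}$ is supported: since $\Gamma^2_{1,1}(e_2,e_4) = e_2$ and $\Gamma^2_{1,1}(e_3,e_4) = -e_3$, we get $[e_2,e_4]_t = -\tfrac{1}{2}e_2 + t e_2 = (t-\tfrac{1}{2})e_2$ and $[e_3,e_4]_t = -\tfrac{1}{2}e_3 - t e_3 = -(t+\tfrac{1}{2})e_3$, while $[e_1,e_4]_t = [e_2,e_3]_t = -e_1$ are unchanged because $\Gamma^2_{1,1}$ vanishes on those pairs. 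This reproduces precisely the relations stated in the theorem.

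The main obstacle is the Jacobi verification, i.e.\ confirming $[\Gamma^2_{1,1},\Gamma^2_{1,1}] = 0$ (equivalently, checking the Jacobi identity directly on the deformed bracket). This is the one genuinely computational step, but it is routine: with only three nontrivial bracket relations involving $e_4$ and one ``central-type'' relation $[e_2,e_3]_t = -e_1$, there are few triples $(e_i,e_j,e_\ell)$ to test, and most vanish because $e_1$ is bracket-absorbing. I would organize the check by the triples containing $e_4$ (the only generator whose action is rescaled by $t$) and observe that the $t$-dependent eigenvalue shifts on $e_2$ and $e_3$ enter the Jacobi expressions symmetrically and cancel. Everything else is stable under the deformation, so no obstruction arises and the linear family is well-defined for all $t \in \F$.
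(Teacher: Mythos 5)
Your proposal is correct and follows essentially the same route as the paper: verify that $[\Gamma^2_{1,1},\Gamma^2_{1,1}]=0$ under the Nijenhuis--Richardson product on each basis triple, conclude that $\alpha_2=0$ satisfies $\delta\alpha_2=-\tfrac{1}{2}[\alpha_1,\alpha_1]$ so the deformation is linear, and read off the deformed brackets from the support of $\Gamma^2_{1,1}$. The alternative you mention (invoking $H^3(\Phi',\Phi')=0$ to kill obstructions) is a valid fallback but is not needed once the bracket is shown to vanish outright, which is exactly the computation the paper carries out.
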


\begin{proof}
Set $\alpha_1=\Gamma^2_{1,1}\in H^2(\Phi',\Phi')$; that is, 

\vspace{0.5cm}

\begin{center}
\begin{minipage}{0.3\linewidth}
\begin{itemize}
\centering
    \item $\alpha_1(e_2,e_4)=e_2$ ~~~~and
\end{itemize}
\end{minipage}
\begin{minipage}{0.3\linewidth}
\begin{itemize}
    \item $\alpha_1(e_3,e_4)=-e_3.$
\end{itemize}
\end{minipage}
\end{center}

\vspace{0.5cm}
\noindent
Recall, that if $\alpha_1$ is the infinitesimal of a deformation $\sum_{i\ge 0}\alpha_it^i$, then $\delta\alpha_2=-\frac{1}{2}[\alpha_1,\alpha_1]$. Calculating,

\vspace{0.2cm}

\begin{itemize}
\item $[\alpha_1,\alpha_1](e_1,e_2,e_3)=2\alpha_1(\alpha_1(e_1,e_2),e_3)-2\alpha_1(\alpha_1(e_1,e_3),e_2)+2\alpha_1(\alpha_1(e_2,e_3),e_1)=0$
\item $[\alpha_1,\alpha_1](e_1,e_2,e_4)=2\alpha_1(\alpha_1(e_1,e_2),e_4)-2\alpha_1(\alpha_1(e_1,e_4),e_2)+2\alpha_1(\alpha_1(e_2,e_4),e_1)=2\alpha_1(e_2,e_1)=0$
\item $[\alpha_1,\alpha_1](e_1,e_3,e_4)=2\alpha_1(\alpha_1(e_1,e_3),e_4)-2\alpha_1(\alpha_1(e_1,e_4),e_3)+2\alpha_1(\alpha_1(e_3,e_4),e_1)=-2\alpha_1(e_3,e_1)=0$
\item $[\alpha_1,\alpha_1](e_2,e_3,e_4)=2\alpha_1(\alpha_1(e_2,e_3),e_4)-2\alpha_1(\alpha_1(e_2,e_4),e_3)+2\alpha_1(\alpha_1(e_3,e_4),e_2)$$ $$=-2\alpha_1(e_2,e_3)-2\alpha_1(e_3,e_2)\\ ~~~~~~~~~~~~~~~~~~~~~~~~=0.$
\end{itemize}

\vspace{0.3cm}

\noindent
Thus, $\delta\alpha_2=0$ which implies that we get a deformation of  $\Phi'_{1,t}$ defined as follows:
\\*

\noindent

\begin{minipage}{0.3\linewidth}
\begin{itemize}
\item $[e_1,e_4]_t=[e_2,e_3]_t=-e_1$
\end{itemize}
\end{minipage}
\begin{minipage}{0.3\linewidth}
\begin{itemize}
\item $[e_2,e_4]_t=(t-\frac{1}{2}e_2)$
\end{itemize}
\end{minipage}
\begin{minipage}{0.3\linewidth}
\begin{itemize} 
\item $[e_3,e_4]_t=-(t+\frac{1}{2})e_3.$
\end{itemize}
\end{minipage}
\end{proof}

\begin{theorem}
The spectrum of $\Phi'_{1,t}$ is given by $\{0,\frac{1}{2}-t, \frac{1}{2}+t,1\}$.
\end{theorem}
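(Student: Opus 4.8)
The plan is to invoke the definition of the spectrum as the spectrum of $\mathrm{ad}\,\widehat{F}$ for a principal element $\widehat{F}$, together with Ooms's theorem (cited above) that this spectrum is independent of the chosen Frobenius functional. Thus it suffices to exhibit a single Frobenius functional $F$ for $\Phi'_{1,t}$, determine its principal element $\widehat{F}$, and read off the eigenvalues of $\mathrm{ad}\,\widehat{F}$.

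First I would check that the functional $F = e_1^*$ dual to $e_1$ is Frobenius for every $t$. Writing $B_F$ for the skew-symmetric matrix $\left(F([e_i,e_j]_t)\right)_{i,j}$ in the basis $\{e_1,e_2,e_3,e_4\}$, the deformed relations show that the only nonzero entries above the diagonal are $F([e_1,e_4]_t) = F([e_2,e_3]_t) = -1$. A direct Pfaffian computation then gives $\det B_F = 1 \neq 0$, so $\eta$ is an isomorphism and $F$ is a Frobenius functional, notably for all values of $t$.

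Next I would solve for the principal element $\widehat{F} = \sum_i x_i e_i$ from the defining condition $F([\widehat{F}, e_j]_t) = F(e_j)$ for $j = 1,\dots,4$. Because $F = e_1^*$ extracts the $e_1$-coefficient, each of these four equations isolates a single unknown: the $j=1$ equation reads $x_4 = 1$, while the $j=2,3,4$ equations read $x_3 = 0$, $-x_2 = 0$, and $-x_1 = 0$, respectively. Hence $\widehat{F} = e_4$, again independent of the deformation parameter.

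Finally I would compute $\mathrm{ad}\,e_4$ on the basis. The relations give $[e_4,e_1]_t = e_1$, $[e_4,e_2]_t = (\tfrac{1}{2}-t)e_2$, $[e_4,e_3]_t = (\tfrac{1}{2}+t)e_3$, and $[e_4,e_4]_t = 0$, so $\mathrm{ad}\,e_4$ is already diagonal with diagonal entries $1,\ \tfrac{1}{2}-t,\ \tfrac{1}{2}+t,\ 0$, yielding the spectrum $\{0, \tfrac{1}{2}-t, \tfrac{1}{2}+t, 1\}$. There is essentially no obstacle here: the entire argument is elementary linear algebra, and the only point worth noting is that the chosen $F$ makes $\widehat{F} = e_4$ act diagonally on the given basis, so no additional diagonalization step is required.
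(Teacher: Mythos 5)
Your proposal is correct and follows essentially the same route as the paper: exhibit $F=e_1^*$ as a Frobenius functional, solve the defining equations to get the principal element $\widehat{F}=e_4$, and read off the spectrum from the already-diagonal matrix of $\mathrm{ad}\,e_4$. The only cosmetic difference is that you certify nondegeneracy of $F$ via the Pfaffian of $\bigl(F([e_i,e_j]_t)\bigr)$ rather than by checking, as the paper does, that the kernel of $B\mapsto F([-,B])$ is trivial; these are equivalent verifications of the same fact.
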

\begin{proof}
To determine the spectrum of $\Phi'_{1,t}$, we must first determine a choice of Frobenius functional. Let $F=f_1e^*_1+f_2e^*_2+f_3e_3^*+f_4e_4^*$ and $B=b_1e_1+b_2e_2+b_3e_3+b_4e_4\in \Phi'_{1,t}\cap\ker(F)$. For $F$ to be Frobenius, it must satisfy the system following system of equations:

\vspace{0.5cm}

\begin{minipage}{0.5\linewidth}
\begin{itemize}
    \item $F([e_1,B])=-b_4f_1=0$
    \item $F([e_2,B])=-b_3f_1-\frac{1}{2}b_4f_2=0$
\end{itemize}
\end{minipage}
\begin{minipage}{0.5\linewidth}
\begin{itemize}
    \item $F([e_3,B])=b_2f_1-\frac{1}{2}b_4f_3=0$ 
    \item $F([e_4,B])=b_1f_1+\frac{1}{2}b_2f_2+\frac{1}{2}b_3f_3=0$
\end{itemize}
\end{minipage}

\vspace{0.5cm}

\noindent
must imply that $B=0$; this is accomplished by taking $f_1=1$ and $f_2=f_3=f_4=0$. Thus, $F=e_1^*$ is a Frobenius functional on $\Phi'_{1,t}$. 

Next, we need to determine the principal element $\widehat{F}\in\Phi'_{1,t}$ corresponding to $F$. If $\widehat{F}=p_1e_1+p_2e_2+p_3e_3+p_4e_4$, then it must be the case that

\vspace{0.5cm}

\begin{minipage}{0.5\linewidth}
\begin{itemize}
    \item $p_4=F([\widehat{F},e_1])=F(e_1)=1$
    \item $p_3=F([\widehat{F},e_2])=F(e_2)=0$
\end{itemize}
\end{minipage}
\begin{minipage}{0.5\linewidth}
\begin{itemize}
    \item $-p_2=F([\widehat{F},e_3])=F(e_3)=0$
    \item $-p_1=F([\widehat{F},e_4])=F(e_4)=0$.
\end{itemize}
\end{minipage}

\vspace{0.5cm}

\noindent
We conclude that $\widehat{F}=e_4$. 

Finally, to determine the spectrum of $\Phi'_{1,t}$, we calculate the spectrum of $[\widehat{F},-]:\Phi'_{1,t}\to \Phi'_{1,t}$. It is straightforward to show that

$$\textup{ad}\widehat{F}=\begin{bmatrix}
   1 & 0 & 0 & 0  \\
   0 & \frac{1}{2}-t & 0 & 0 \\
   0 & 0 & \frac{1}{2}+t & 0 \\
    0 & 0 & 0 & 0
\end{bmatrix}$$ so that the spectrum of $\Phi'_{1,t}$ is $$\left\{0,\frac{1}{2}-t, \frac{1}{2}+t,1\right\}.$$
\end{proof}

\section{Epilogue}

Topical investigations regarding the spectrum of a Frobenius Lie algebra have concentrated on  seaweed Lie algebras (see \textbf{\cite{DK}}), or simply ``seaweeds''  (elsewhere called \textit{biparabolic} \textbf{\cite{Joseph}}), and the recently introduced Lie poset algebras (see \textbf{\cite{CG}}).  In a series of papers 
by Coll et al (see \textbf{\cite{unbroken},\cite{specAB},} and \textbf{\cite{specD}}), it has been established that the unbroken spectrum property holds for all the classical and exceptional Frobenius seaweeds.

Indeed, the interesting spectral properties of seaweeds was the impetus for the motivating question of this article.  However, seaweeds appear to be 
 cohomologically inert so cannot be deformed.\footnote{In 2014 Gerstenhaber conjectured that seaweeds were cohomologically trivial.  This was verified for type-A seaweeds by Elashvili and Rakviashvili (see \textbf{\cite{elashvili}, 2016)}. Subsequent work (yet to be published) by Coll et al (see \textbf{\cite{CHM}})
 establishes that for types B, C, and D seaweeds are cohomologically trivial.  This latter work is the central result of Alan Hylton's Ph.D. dissertation at Lehigh University (in progress).}

In contrast, Lie poset algebras, which are necessarily solvable, have a rich deformation theory.  However, we have no examples of deformable Frobenius Lie poset algebras.  
It is also worth noting that the unbroken spectrum property seems to be a property of Frobenius Lie poset algebras, although the spectrum is ``binary'', consisting of only 0's and 1's (see \textbf{\cite{CM},\cite{CM2},\cite{CM3},} and \textbf{\cite{CMR}}).

It is interesting to note that the spectrum of $\Phi''(\Delta)$ is an unbroken sequence of integers if and only if $\Delta=0$ or -2. The proof is straightforward as follows.  Recall that the spectrum of $\Phi''(\Delta)$ is $\{0,1,\frac{1\pm\sqrt{1-4\Delta}}{2}\}$. Thus, the spectrum of $\Phi''(\Delta)$ consists of integers if and only if $1-4\Delta=a^2$, where $a$ is an odd integer. If $1-4\Delta=a^2$, then the spectrum of $\Phi''(\Delta)$ is given by $\{0,1,\frac{1\pm a}{2}\}$. When $a=\pm 1$, i.e., $\Delta=0$, the spectrum of $\Phi''(\Delta)$ is $\{0,0,1,1\}$; and when $a=\pm 3$, i.e., $\Delta=-2$, the spectrum of $\Phi''(\Delta)$ is $\{-1,0,1,2\}$. Since $\frac{1-a}{2}$ (resp. $\frac{1+a}{2}$) strictly decreases (resp. increases) as $a$ increases, the spectrum can only be unbroken for $a=\pm 1$ and $\pm 3$, i.e., $\Delta=0$ and -2.

Note that $\Phi''(\Delta)$, where 
$\Delta = 0, -2$ is not a seaweed, since it deforms. 
And while $\Phi''(0)$ and $\Phi''(-2)$  are both solvable, neither is a Lie poset algebra of classical type since there is exactly one such four-dimensional Frobenius algebra. When the ground field is the complex numbers this algebra is isomorphic to $\Phi^{'''}(\epsilon)$, for all $\epsilon \ne 0$.



     


\section{Appendix A -- dimension six}

The following table contains information about the isomorphism classes of non-decomposable 6-dimensional Frobenius Lie algebras over an algebraically closed field of characteristic 0.
\begin{center}
\renewcommand\arraystretch{2}
\setlength\doublerulesep{0pt}
\begin{tabular}{|p{1.8cm}|||p{4.4cm}|p{1cm}|p{1cm}|p{2.1cm}|p{2.2cm}|}
\hline
$\dim=6$ & Commutator Relations & dim $H^2$ & dim $H^3$ & Spectrum & Deformation(s)\\
 \hline\hline\hline
$\Phi_{6,1}$ & $[X_1,Y_1]=Y_1,$ $[X_1,Y_3]=Y_3,$  $[X_1,Y_4]=2Y_4,$ $[X_2,Y_2]=Y_2,$ $[X_2,Y_3]=Y_3,$ $[X_2,Y_4]=Y_4,$ $[Y_1,Y_2]=Y_3,$ $[Y_1,Y_3]=Y_4$ & 0 & 0 & $0,0,0,$ $1,1,1$ & No \\
\hline
$\Phi_{6,2}\{\xi,\eta\},$ $\xi\neq \eta$ & $[X_1,Y_1]=Y_1,$ $[X_1,Y_3]=Y_3,$ $[X_1,Y_4]=\xi Y_4,$ $[X_2,Y_2]=Y_2,$ $[X_2,Y_3]=Y_3,$ $[X_2,Y_4]=\eta Y_4,$ $[Y_1,Y_2]=Y_3$ & & & $0,0,\frac{\eta-1}{\eta-\xi},$ $\frac{1-\xi}{\eta-\xi}, 1,1$ & \\
\hline
$\Phi_{6,3}\{\xi:\eta\},$ $(\xi:\eta)\neq (1:1)$ & $[X_1,Y_1]=Y_1,$ $[x_1,Y_3]=Y_3,$ $[X_1,Y_4]=Y_4+\xi Y_3,$ $[X_2,Y_2]=Y_2,$ $[X_2,Y_3]=Y_3,$ $[X_2,Y_4]=Y_4+\eta Y_3,$ $[Y_1,Y_2]=Y_3$ & & & $0,0,\frac{\eta}{\eta-\xi},$ $\frac{\eta}{\xi-\eta},0,0$ & \\
\hline
$\Phi_{6,4}(\xi:\eta),$ $(\xi:\eta)\neq (0:0)$ & $[X_1,Y_1]=Y_1+\xi Y_4,$ $[X_1,Y_3]=Y_3,$ $[X_1,Y_4]=Y_4,$ $[X_2,Y_1]=\eta Y_4,$ $[X_2,Y_2]=Y_2,$ $[X_2,Y_3]=Y_3,$ $[Y_1,Y_2]=Y_3$ & & & $0,0,0,$ $1,1,1$ & \\
\hline
$\Phi_{6,5}(\xi:\eta),$ $\eta\neq 0$ & $[X_1,Y_1]=\frac{1}{2}Y_1+\xi Y_2,$ $[X_1,Y_2]=\frac{1}{2}Y_2,$ $[X_1,Y_3]=Y_3,$ $[X_2,Y_1]=\eta Y_2,$ $[X_2,Y_4]=Y_4,$ $[Y_1,Y_2]=Y_3$ & & & $0,0,\frac{1}{2},$ $\frac{1}{2},1,1$ & \\
\hline
$\Phi_{6,6}$ & $[X,Y_1]=Y_1,$ $[X,Y_2]=2Y_2,$ $[X,Y_3]=3Y_3,$ $[X,Y_4]=4Y_4,$ $[X,Y_5]=5Y_5,$ $[Y_1,Y_2]=Y_3,$ $[Y_1,Y_3]=Y_4,$ $[Y_1,Y_4]=Y_5,$ $[Y_2,Y_3]=Y_5$ & 0 & 0 & $0,\frac{1}{5},\frac{2}{5},$ $\frac{3}{5},\frac{4}{5},1$ & No \\
\hline
$\Phi_{6,7}(\xi)$ & $[X,Y_1]=\xi Y_1,$ $[X,Y_2]=2\xi Y_2,$ $[X,Y_3]=(1-2\xi)Y_3,$ $[X,Y_4]=(1-\xi)Y_4,$ $[X,Y_5]=Y_5,$ $[Y_1,Y_3]=Y_4,$ $[Y_1,Y_4]=[Y_2,Y_3]=Y_5$ & & & $0,\xi,2\xi,$ $1-~2\xi,1~-~\xi,~1$ & \\
\hline
$\Phi_{6,8}$ & $[X,Y_1]=Y_2,$ $[X,Y_3]=Y_3-Y_4,$ $[X,Y_4]=Y_4,$ $[X,Y_5]=Y_5,$ $[Y_1,Y_3]=Y_4,$ $[Y_1,Y_4]=[Y_2,Y_3]=Y_5$ & 3 & 2 & $0,0,0$ $1,1,1$ & Yes (3) \\
\hline
\end{tabular}
\end{center}
\begin{center}
\begin{table}[H]
\renewcommand\arraystretch{2}
\setlength\doublerulesep{0pt}
\begin{tabular}{|p{1.8cm}|||p{4.4cm}|p{1cm}|p{1cm}|p{2.1cm}|p{2.2cm}|}
\hline
$\Phi_{6,9}$ & $[X,Y_1]=\frac{1}{3}Y_1+Y_3,$ $[X,Y_2]=\frac{2}{3}Y_2+Y_4,$ $[X,Y_3]=\frac{1}{3}Y_3,$ $[X,Y_4]=\frac{2}{3}Y_4,$ $[X,Y_5]=Y_5,$ $[Y_1,Y_3]=Y_4,$ $[Y_1,Y_4]=[Y_2,Y_3]=Y_5$ & 1 & 0 & $0,\frac{1}{3},\frac{1}{3},$ $\frac{2}{3},\frac{2}{3},1$ & Yes (1)\\
\hline
$\Phi_{6,10}$ & $[X,Y_1]=\frac{1}{4}Y_1,$ $[X,Y_2]=\frac{1}{2}Y_2,$ $[X,Y_3]=\frac{1}{2}Y_3+Y_2,$ $[X,Y_4]=\frac{3}{4}Y_4,$ $[X,Y_5]=Y_5,$ $[Y_1,Y_3]=Y_4,$ $[Y_1,Y_4]=[Y_2,Y_3]=Y_5$ & 1 & 0 & $0,\frac{1}{4},\frac{1}{2},$ $\frac{1}{2},\frac{3}{4},1$ & Yes (1)\\
\hline
$\Phi_{6,11}\{\xi,\eta\}$ & $[X,Y_1]=\xi Y_1$ $[X,Y_2]=\eta Y_2,$ $[X,Y_3]=(1-\eta)Y_3,$ $[X,Y_4]=(1-\xi)Y_4,$ $[X,Y_5]=[Y_1,Y_4]=[Y_2,Y_3]=Y_5$ & & & $0,\xi,\eta,$ $1-\xi,$ $1-\eta,1$ & \\
\hline
$\Phi_{6,12}(\xi)\cong\Phi_{6,12}(1-\xi)$ & $[X,Y_1]=\xi Y_1,$ $[X,Y_2]=\frac{1}{2}Y_2+Y_3,$ $[X,Y_3]=\frac{1}{2}Y_3,$ $[X,Y_4]=(1-\xi)Y_4,$ $[X,Y_5]=[Y_1,Y_4]=[Y_2,Y_3]=Y_5$ & & & $0,\frac{1}{2},\frac{1}{2},\xi,\text{\ \ }$ $1-\xi,1$ & \\
\hline
$\Phi_{6,13}(\xi)\cong \Phi_{6,13}(1-\xi)$ & $[X,Y_1]=\xi Y_1+Y_2,$ $[X,Y_2]=\xi Y_2,$ $[X,Y_3]=(1-\xi)Y_3-Y_4,$ $[X,Y_4]=(1-\xi)Y_4,$ $[X,Y_5]=[Y_1,Y_4]=[Y_2,Y_3]=Y_5$ & & & $0,\xi,\xi,$ $1-\xi,1-~\xi,~1$ & \\
\hline
$\Phi_{6,14}$ & $[X,Y_1]=\frac{1}{2}Y_1+Y_2,$ $[X,Y_2]=\frac{1}{2}Y_2+Y_3,$ $[X,Y_3]=\frac{1}{2}Y_3-Y_4,$ $[X,Y_4]=\frac{1}{2}Y_4,$ $[X,Y_5]=[Y_1,Y_4]=[Y_2,Y_3]=Y_5$ & 2 & 0 & $0,\frac{1}{2},\frac{1}{2},$ $\frac{1}{2},\frac{1}{2},1$ & Yes (2)\\
\hline
$\Phi_{6,15}$ & $[X,Y_1]=\frac{1}{2}Y_1+Y_4,$ $[X,Y_2]=\frac{1}{2}Y_2+Y_3,$ $[X,Y_3]=\frac{1}{2}Y_3,$ $[X,Y_4]=\frac{1}{2}Y_4,$ $[X,Y_5]=[Y_1,Y_4]=[Y_2,Y_3]=Y_5$ & 4 & 0 & $0,\frac{1}{2},\frac{1}{2},$ $\frac{1}{2},\frac{1}{2},1$ & Yes (4) \\
\hline
\end{tabular}
\caption{Six-dimensional Frobenius Lie algebras}
\label{table:six}
\end{table}
\end{center}
If the parameters $\xi,\eta\in\mathbb{F}$ are separated by a colon, the isomorphism class does not change if both parameters are multiplied by a non-zero number. There are curly brackets around the parameters when the isomorphism class does not depend on the order of the parameters. The algebra $\Phi_{6,11}\{\xi,\eta\}$ depends only on the set $\{\xi,1-\xi,1-\eta\}.$ Except for these isomorphisms, the isomorphism classes of listed in the table are pairwise distinct.

\begin{remark}
The reader will have no difficulty establishing that all deformations associated with Table \ref{table:six} are linear.
\end{remark}

\begin{Ex}\label{ex:6dim}
Consider the family given by $\Phi_{6,12}(\xi).$ 
Routine calculation yield:
$$\dim H^2(\Phi_{6,12}(\xi),\Phi_{6,12}(\xi))=\begin{cases}
4, & \xi=0,1\\
3, & \xi=\frac{1}{4},\frac{3}{4}\\
6, & \xi=\frac{1}{2}\\
2, & \text{otherwise},
\end{cases}$$
and
$$\dim H^3(\Phi_{6,12}(\xi),\Phi_{6,12}(\xi))=\begin{cases}
2, & \xi=-1,0,1,2\\
1, & \xi=-\frac{1}{4},\frac{1}{4},\frac{3}{4},\frac{5}{4}\\
0, & \text{otherwise}.
\end{cases}$$
Note that the number of deformations of $\Phi_{6,12}(\xi)$ corresponds exactly to $\dim H^2$ for each value of $\xi,$ and each deformation is linear.
We illustrate one such deformation as follows.

Let $\Phi_{6,12,t}(2)$ be one of the two deformations given by $\Phi_{6,12}(\xi)$ when $\xi=2.$ This deformation family is defined by the following collection of commutator relations:

\vspace{0.5cm}

\begin{minipage}{0.5\linewidth}
\begin{itemize}
    \item $[e_1,e_2]_t=(2+\frac{t}{7})e_2$
    \item $[e_1,e_3]_t=(\frac{1}{2}-\frac{3t}{7})e_3+e_4$
    \item $[e_1,e_4]_t=(\frac{1}{2}-\frac{3t}{7})e_4$
\end{itemize}
\end{minipage}
\begin{minipage}{0.5\linewidth}
\begin{itemize}
    \item $[e_1,e_5]_t=(-1-t)e_5$
    \item $[e_1,e_6]_t=(1-\frac{6t}{7})e_6$
    \item $[e_2,e_5]_t=e_6$
    \item $[e_3,e_4]_t=e_6$
\end{itemize}
\end{minipage}

\end{Ex}

\end{document}